\newtheorem{thm}{Theorem}[section]
\newtheorem{prop}[thm]{Proposition}
\newtheorem{prob}[thm]{Problem}
\theoremstyle{definition}
\newtheorem{defin}[thm]{Definition}
\newtheorem{exa}[thm]{Example}
\newtheorem*{xrem}{Remark}
\numberwithin{equation}{section}
\begin{document}

\baselineskip=17pt

\title[Okounkov bodies]{Numerical and enumerative results on Okounkov bodies}

\author[P. Pokora]{Piotr Pokora}
\address{Institute of Mathematics\\ Pedagogical University of Cracow \\
Podchor\c a\.zych 2, PL-30-084 Krak\'ow, Poland. }
%\newline Current address: Fachbereich Mathematik und Informatik, Philipps-Universit\"at Marburg, Hans-Meerwein-Stra\ss e, D-35032 Marburg, Germany.
\email{piotrpkr@gmail.com}

\date{}

\begin{abstract}
In this note we focus on three independent problems on Okounkov bodies for projective varieties. The main goal is to present a geometric version of the classical Fujita Approximation Theorem, a Jow-type theorem \cite{Jow} and a cardinality formulae for Minkowski bases on a certain class of smooth projective surfaces.
\end{abstract}

\subjclass[2010]{Primary 14C20; Secondary 14J26}

\keywords{Okounkov bodies, Fujita Approximation, big and ample divisors, numerical equivalence of divisors, Zariski decompositions, Zariski chambers, Minkowski decomposition}

\maketitle
\section{Introduction}
We present three results on Okounkov bodies, mainly for projective surfaces. The first one can be viewed as a geometric Fujita approximation, which tells us that the Fujita Approximation Theorem for big divisors induces the shape approximation of associated Okounkov bodies. The second result is a certain variation on Jow theorem \cite{Jow}, which roughly speaking tells us that Okounkov bodies can be used to check numerical equivalence of pseudoeffective divisors. The last section is devoted to the cardinality problem for \emph{Minkowski bases} \cite{PatDav} for surfaces with rational polyhedral pseudoeffective cones.

Let us recall briefly what Okounkov bodies are. These bodies were introduced independently by Lazarsfeld and Musta\c t\u a \cite{LM09} and Kaveh and Khovanskii \cite{KK} and they are \emph{convex bodies} $\triangle(D) \subset \mathbb{R}^{n}$ attached to big divisors $D$ on smooth projective varieties $X$ of dimension $n$ with respect to an \emph{admissible flag}, i.e.,
a sequence of irreducible subvarieties $X = Y_{0} \supset Y_{1} \supset ... \supset Y_{n} = \{pt\}$ such that ${\rm codim}_{X} Y_{i} = i$ and $Y_{n}$ is a smooth point of each $Y_{i}$'s. We refer to Section 1 in \cite{LM09} for further details about Okounkov bodies.

Recently Okounkov bodies have been applied to some problems appearing in other branches of mathematics, for instance in mathematical physics. One of the most prominent examples is the paper due to Harada and Kaveh \cite{HK} in which the authors consider complete integrable systems in the context of Okounkov bodies. Roughly speaking they showed that the image of the so-called moment map corresponds to a certain Okounkov body, which is highly remarkable.
\section{Geometric Fujita Approximation}
Assume that $X$ is an irreducible complex projective variety of dimension $n > 0$. Recall that for an integral divisor $D$ the \emph{volume} of $D$ is a real number defined by

$${\rm vol}_{X}(D) = {\rm lim \, sup}_{m \rightarrow \infty} \frac{h^{0}(X, \mathcal{O}_{X}(mD))}{m^{n}/n!}.$$

It is well known that $D$ is big if and only if ${\rm vol}_{X}(D) > 0$.

In \cite{LM09} the authors studied the Fujita Approximation Theorem in the language of Okounkov bodies. Let us recall a classical statement of this theorem.

\begin{thm}[Theorem 11.4.4 (Part II), \cite{PAG}]
\label{fujitaapproximation}

Let $D$ be a big integral divisor on $X$ and fix a positive number $\varepsilon >0$. Then there exists a birational morphism $\mu : X' \rightarrow X$, where $X'$ is irreducible, and an integer $p > 0$ such that
$$\mu^{*}(pD) = A + E,$$
where $A$ is an ample divisor and $E$ is an effective divisor, both integral, having the property that
$${\rm vol}_{X'} (A) > p^{n}({\rm vol}_{X}(D) - \varepsilon).$$
\end{thm}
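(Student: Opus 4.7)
The plan is to construct the birational model by resolving the base locus of the linear system $|pD|$ for $p$ sufficiently large, and then to perturb the nef moving part to an honest ample divisor while keeping the complementary divisor effective. Concretely, I would fix $p \gg 0$ so that $h^{0}(X, pD)$ is close to $\frac{p^{n}}{n!}\mathrm{vol}_{X}(D)$, which is just the definition of the volume, and then take $\mu : X' \to X$ to be a log resolution of the base ideal $\mathfrak{b}(|pD|)$; on $X'$ the pulled-back linear series decomposes as $\mu^{*}|pD| = |V| + F$, with $|V|$ base-point-free and $F$ the effective fixed divisor. Setting $M := \mu^{*}(pD) - F$, the divisor $M$ is base-point-free (so nef), pullback induces $H^{0}(X', M) \cong H^{0}(X, pD)$, and for $p$ large enough $M$ is also big, so $\mathrm{vol}_{X'}(M) = M^{n}$.

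The heart of the argument is the volume comparison $M^{n} \ge p^{n}(\mathrm{vol}_{X}(D) - \varepsilon)$ for $p$ large. The upper bound $M^{n} \le p^{n}\mathrm{vol}_{X}(D)$ is immediate from multiplication by the canonical section of $kF$, which gives $H^{0}(X', kM) \hookrightarrow H^{0}(X, kpD)$ for every $k$ and hence $\mathrm{vol}_{X'}(M) \le p^{n}\mathrm{vol}_{X}(D)$. For the reverse inequality I would invoke Fujita's original argument: let $p$ vary and analyse the double asymptotic in $(k, p)$, showing that after first sending $k \to \infty$ and then $p \to \infty$ the inclusion $H^{0}(X', kM_{p}) \hookrightarrow H^{0}(X, kpD)$ becomes sharp to leading order, essentially because any systematic deficit on the order of $p^{n}$ would contradict the convergence $p^{-n} n!\, h^{0}(X, pD) \to \mathrm{vol}_{X}(D)$.

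Once $M$ is known to be nef and big with $M^{n}$ close to $p^{n}\mathrm{vol}_{X}(D)$, I would pass from $M$ to the desired ample integral $A$ by a small perturbation. Choosing an ample divisor $H$ on $X'$ and a small rational $t > 0$, the divisor $A_{t} := M + tH$ is ample and $A_{t}^{n} \to M^{n}$ as $t \to 0$, so $t$ can be chosen so that $A_{t}^{n}$ still exceeds $p^{n}(\mathrm{vol}_{X}(D) - \varepsilon)$. The excess $tH$ is absorbed into the effective side by replacing $p$ with a larger multiple, which makes $F$ grow until $F - tH$ is linearly equivalent to an effective divisor; a final clearing of denominators yields the integral decomposition $\mu^{*}(pD) = A + E$ with $A$ ample and $E$ effective. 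The principal obstacle is the lower bound on $M^{n}$ in the middle paragraph, since neither nefness of $M$ nor Kodaira's lemma applied naively to $D$ captures asymptotically all of the volume; this really requires the full force of Fujita's asymptotic analysis.
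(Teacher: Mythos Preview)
The paper does not give its own proof of this statement: it is quoted verbatim from Lazarsfeld's book, and the only comment on the argument is the sentence ``The proof presented in \cite{PAG} is based on the theory of multiplier ideals.'' So there is nothing in the paper to compare your sketch against beyond that pointer; the multiplier-ideal proof in \cite{PAG} is structurally different from the base-locus-resolution outline you give.

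That said, your outline has a genuine gap precisely at the step you flag as ``the principal obstacle''. The lower bound $M_{p}^{n}\ge p^{n}(\mathrm{vol}_{X}(D)-\varepsilon)$ for $p\gg 0$ \emph{is} the content of Fujita approximation, and your justification for it is circular. The inclusion $H^{0}(X',kM_{p})\hookrightarrow H^{0}(X,kpD)$ points the wrong way: it gives only the upper bound $M_{p}^{n}\le p^{n}\mathrm{vol}_{X}(D)$. Knowing that $p^{-n}n!\,h^{0}(X,pD)\to \mathrm{vol}_{X}(D)$ says nothing, by itself, about how many sections of $kpD$ lie in the image of $S^{k}H^{0}(pD)$, and hence nothing about $M_{p}^{n}$ from below; one could a priori imagine the generated subsystem accounting for an arbitrarily small fraction of the volume. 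Closing this gap is exactly where the asymptotic multiplier ideals enter in \cite{PAG} (or, alternatively, where the Okounkov-body argument of \cite{LM09}, quoted here as Theorem~2.2, does the work). Saying ``invoke Fujita's original argument'' at this point is assuming the theorem.

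The perturbation from the nef $M$ to an integral ample $A$ in your last paragraph is more routine, but also not quite as written: enlarging $p$ changes the resolution and hence $F$, so ``making $F$ grow'' by passing to a multiple needs to be organised more carefully (for instance by first fixing a decomposition $p_{0}D\equiv H+E_{0}$ from Kodaira's lemma on $X$ and pulling that back). This is fixable; the volume lower bound is not, without substantial new input.
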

Of course this theorem remains true for numerical classes of divisors. The proof presented in \cite{PAG} is based one the theory of multiplier ideals.

In \cite{LM09} the authors formulated the above result in the language of semigroups and Okounkov bodies.
\begin{thm}[Theorem 3.3, \cite{LM09}]
Let $D$ be a big divisor on $X$ and for numbers $p, k >0$ write
$$V_{k,p} = {\rm Im} \bigg( S^{k} H^{0}(X, \mathcal{O}_{X}(pD)) \rightarrow H^{0}(X, \mathcal{O}_{X}(pkD)) \bigg),$$
where $S^{k}$ denotes the $k$-th symmetric power.
Given $\varepsilon >0$, there exists an integer $p_{0}=p_{0}(\varepsilon)$ having the property that if $p \geq p_{0}$, then
$$\lim_{k \rightarrow \infty} \frac{ \dim V_{k,p}}{p^{n}k^{n}/n!} \geq {\rm vol}_{X}(D) - \varepsilon.$$
\end{thm}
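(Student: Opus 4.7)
The plan is to deduce this statement from the classical Fujita Approximation Theorem~\ref{fujitaapproximation} by a standard sections-versus-dimensions translation. The idea is that the ample piece $A$ produced by Fujita approximation on a birational model dominates a subspace of $V_{k,p}$ whose dimension already recovers, up to $\varepsilon$, the full volume of $D$.

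Given $\varepsilon > 0$, first apply Theorem~\ref{fujitaapproximation} to obtain a birational morphism $\mu \colon X' \to X$, a positive integer $p_1$, and a decomposition $\mu^{*}(p_1 D) = A + E$ with $A$ ample, $E$ effective, and $\operatorname{vol}_{X'}(A) > p_1^{n}(\operatorname{vol}_{X}(D) - \varepsilon)$. Since $\mu$ is birational, $H^0(X, mD) \cong H^0(X', \mu^{*}(mD))$ for every $m \geq 0$, so the multiplication maps defining $V_{k, p_1}$ can be transported upstairs to $X'$. Multiplication by the canonical section $s_E \in H^0(X', \mathcal{O}_{X'}(E))$ gives an injection $H^0(X', kA) \hookrightarrow H^0(X', k(A + E)) = H^0(X', k\mu^{*}(p_1 D)) \cong H^0(X, p_1 k D)$ for each $k \geq 1$, and this is compatible with the multiplication maps on symmetric powers.

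Consequently the image $W_{k,A} := \operatorname{Im}\bigl(S^k H^0(X', A) \to H^0(X', kA)\bigr)$ is sent injectively into $V_{k, p_1}$, so $\dim V_{k, p_1} \geq \dim W_{k, A}$. After replacing $A$ by a sufficiently high multiple $rA$ (equivalently, replacing $p_1$ by $r p_1$ in the Fujita decomposition), Serre vanishing and Castelnuovo--Mumford regularity ensure that $A$ embeds $X'$ as a projectively normal variety, making the multiplication maps $S^k H^0(X', A) \twoheadrightarrow H^0(X', kA)$ surjective for every $k \geq 1$. Asymptotic Riemann--Roch for the ample divisor $A$ then gives $\dim W_{k,A} = \operatorname{vol}_{X'}(A)\, k^{n}/n! + O(k^{n-1})$, and hence
$$
\lim_{k \to \infty} \frac{\dim V_{k, p_1}}{p_1^{n} k^{n}/n!} \;\geq\; \frac{\operatorname{vol}_{X'}(A)}{p_1^{n}} \;>\; \operatorname{vol}_{X}(D) - \varepsilon.
$$

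The remaining step, and in my view the main obstacle, is to upgrade this inequality from the single $p_1$ produced by Fujita approximation to every $p \geq p_0$. For multiples $p = m p_1$ one has the containment $V_{k m, p_1} \subseteq V_{k, m p_1}$, obtained by grouping a product of $km$ sections of $p_1 D$ into $k$ blocks of $m$; dividing by $(m p_1)^{n} k^{n}/n!$ transfers the asymptotic bound from $p_1$ to every $m p_1$. Handling an arbitrary residue of $p$ modulo $p_1$ requires reapplying Fujita approximation with a smaller tolerance $\varepsilon/2$ and interpolating between the resulting models; this uniform bookkeeping in $p$ is the genuinely delicate part of the argument, whereas the ample-divisor estimate that forms the conceptual core is routine once the Fujita decomposition is in hand.
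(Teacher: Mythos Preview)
The paper itself does not prove this statement; it is quoted from \cite[Theorem~3.3]{LM09} without argument, so there is no in-text proof to compare against. For context, in \cite{LM09} the logic runs opposite to yours: Lazarsfeld and Musta\c{t}\u{a} establish the semigroup inequality directly from the Okounkov-body machinery (the body attached to the subalgebra generated by $H^{0}(X,pD)$ fills up $p\cdot\triangle(D)$ as $p\to\infty$, since valuation vectors become dense), and then derive the classical Fujita Approximation Theorem~\ref{fujitaapproximation} as a corollary. Your deduction goes the other way, which is legitimate because Theorem~\ref{fujitaapproximation} has an independent proof via multiplier ideals in \cite{PAG}; but it means your argument is not the one the citation points to.

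The core of your proof is correct: the injection $W_{k,A}\hookrightarrow V_{k,p_1}$ given by $\sigma\mapsto s_E^{k}\sigma$ works exactly as you describe, and passing to a large multiple of $A$ to force projective normality is standard. The step you flag as the ``main obstacle'' is indeed a genuine gap as written: regrouping only gives the inequality along multiples of $p_1$, and ``interpolating between models'' is not an argument. A concrete fix, simpler than what you sketch, is the following. Since $D$ is big there is $c>0$ with $H^{0}(X,rD)\neq 0$ for all $r\ge c$; for $p$ large write $p=qp_1+r$ with $c\le r<p_1+c$, pick $0\neq s_0\in H^{0}(X,rD)$, and observe that $s_0^{k}s_E^{qk}\cdot H^{0}(X',kqA)\subset V_{k,p}$. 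This gives $\lim_{k}\dim V_{k,p}/(p^{n}k^{n}/n!)\ge (qp_1/p)^{n}\bigl({\rm vol}_X(D)-\varepsilon/2\bigr)$, and since $r$ is bounded, $(qp_1/p)^{n}\to 1$ as $p\to\infty$, yielding the claim for all $p\ge p_0(\varepsilon)$. The Okounkov-body proof in \cite{LM09} handles all $p$ at once and avoids this bookkeeping, which is one practical advantage of that route over yours.
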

We refer to \cite[Remark 3.4]{LM09} for a link between the classical statement of the Fujita Approximation Theorem with the above result.

Our main aim in this section is to present a certain reformulation of the Fujita Approximation Theorem in the language of shapes of Okounkov bodies for big divisors. Our result tells us that using ample divisors on a modification one can approximate shape of Okounkov bodies of big divisors as precisely as desired. the Fujita Approximation Theorem provides quantitive statement concerning the volume, a numerical invariant of divisors. This article extends this result in a geometrical direction connecting the numerical nature with geometry. Let us point out that a certain kind of approximation can be found also in \cite[Lemma~8]{AKL}.

It may happen that Okounkov bodies for certain ample divisors may not be polyhedral, but there is also a large class of projective varieties and divisors for which Okounkov bodies are rational polyhedral, for instance Okounkov bodies of big divisors on smooth projective surfaces -- see \cite{KLM12} for details and results.

\begin{thm}
\label{Fujita}
Let $X$ be a smooth projective variety of dimension $n$. Assume that $D$ is a big divisor on $X$. Then for every $\beta  > 0$ there exists a birational morphism $\eta: \widetilde{X} \rightarrow X$, an ample divisor $A$ on $\widetilde{X}$, $\delta > 0$ and an admissible flag $Y_{\bullet}$ on $\widetilde{X}$ such that the Okounkov body $\triangle_{Y_{\bullet}}(\eta^{*}(D))$ contains $\triangle_{Y_{\bullet}}(A)$ with
$${\rm vol}_{\mathbb{R}^{n}}(\triangle_{Y_{\bullet}}(\eta^{*}(D)) \setminus \triangle_{Y_{\bullet}}(A)) < \beta$$
and is contained in $\triangle_{Y_{\bullet}}((1+\delta)A)$ with
$${\rm vol}_{\mathbb{R}^{n}}(\triangle_{Y_{\bullet}}((1+ \delta)A) \setminus \triangle_{Y_{\bullet}}(\eta^{*}(D))) < \beta.$$
\end{thm}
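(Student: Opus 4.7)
The plan is to deduce Theorem~\ref{Fujita} from the classical Fujita approximation (Theorem~\ref{fujitaapproximation}) by choosing the admissible flag generically and reading the resulting birational approximation through the valuation map on sections.

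First I apply Theorem~\ref{fujitaapproximation} with error $\varepsilon>0$ to be tuned in terms of $\beta$, producing a birational morphism $\mu\colon X'\to X$, an integer $p>0$, an ample integer divisor $A_{0}$ and an effective divisor $E$ on $X'$ with $\mu^{*}(pD)=A_{0}+E$ and ${\rm vol}_{X'}(A_{0})>p^{n}({\rm vol}_{X}(D)-\varepsilon)$. Set $\widetilde{X}=X'$, $\eta=\mu$, and $A:=\tfrac{1}{p}A_{0}$, the corresponding $\mathbb{Q}$-ample divisor, so that $\eta^{*}D=A+\tfrac{1}{p}E$ and ${\rm vol}_{\widetilde{X}}(A)>{\rm vol}_{X}(D)-\varepsilon$. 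Then I pick an admissible flag $Y_{\bullet}$ generic with respect to $E$: take $Y_{1}$ a smooth member of a very ample system no component of which lies in ${\rm Supp}(E)$, and inductively $Y_{i+1}\subset Y_{i}$ avoiding the proper transform of $E\cap Y_{i}$. For such a flag the canonical section $s_{E}\in H^{0}(\widetilde{X},\mathcal{O}(E))$ has vanishing valuation $\nu_{Y_{\bullet}}(s_{E})=\mathbf{0}\in\mathbb{N}^{n}$.

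The inner inclusion then follows from the valuation-preserving injection $H^{0}(\widetilde{X},mA_{0})\hookrightarrow H^{0}(\widetilde{X},m\mu^{*}(pD))$ given by $s\mapsto s\cdot s_{E}^{m}$: closing up normalized valuation images gives $\triangle_{Y_{\bullet}}(A_{0})\subseteq\triangle_{Y_{\bullet}}(\mu^{*}(pD))=p\cdot\triangle_{Y_{\bullet}}(\eta^{*}D)$, so dividing by $p$ yields $\triangle_{Y_{\bullet}}(A)\subseteq\triangle_{Y_{\bullet}}(\eta^{*}D)$, and by the Lazarsfeld--Musta\c t\u a volume identity the interior defect is $({\rm vol}(D)-{\rm vol}(A))/n!<\varepsilon/n!$, which is below $\beta$ whenever $\varepsilon<\beta\cdot n!$. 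For the outer side I would pick $\delta>0$ small enough that $(1+\delta)^{n}{\rm vol}(A)<{\rm vol}(D)+\beta\cdot n!$; this forces the exterior volume discrepancy to be $<\beta$ as soon as $\triangle_{Y_{\bullet}}(\eta^{*}D)\subseteq(1+\delta)\triangle_{Y_{\bullet}}(A)$ has been proved. For that containment I would leverage that both bodies are convex and share the origin (via sections not vanishing at the flag point), and that the slack region $\triangle_{Y_{\bullet}}(\eta^{*}D)\setminus\triangle_{Y_{\bullet}}(A)$ is controlled --- thanks to the generic flag --- by the asymptotic order of vanishing along $Y_{1}$ of sections of $\mu^{*}(mpD)$ not lying in the image of multiplication by $s_{E}^{m}$; a uniform scaling about the origin should then absorb this slab.

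The main obstacle is exactly this outer containment: a small volume defect between two convex bodies does not, in general, imply containment in a small dilation, since a convex hull can develop a thin spike of negligible volume but large reach. The theorem works only because the Fujita data constrain the shape, and not merely the volume, of the slack region. Making this shape control quantitative, and producing a uniform $\delta>0$ from $\varepsilon$ in an explicit way, is the delicate step; it requires combining the valuation computation above with a convex-geometric compactness argument tailored to Okounkov bodies coming from Fujita approximations.
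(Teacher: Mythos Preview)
Your approach is essentially the paper's: apply classical Fujita approximation, choose the flag generically so that the canonical section of $E$ has valuation zero (the paper phrases this as ``$0\in\triangle_{Y_\bullet}(E)$'' together with $Y_1\not\subset\mathbb{B}_+(\eta^*D)$), deduce the inner inclusion $\triangle_{Y_\bullet}(A)\subseteq\triangle_{Y_\bullet}(\eta^*D)$ via multiplication by $s_E$, and bound the inner volume defect by $\varepsilon/n!$ from the Lazarsfeld--Musta\c t\u a volume identity. Your treatment of this half is in fact more detailed than the paper's.

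The gap you flag for the outer containment is genuine, and the paper's own proof does not resolve it either. The paper simply asserts that as $\varepsilon$ decreases the body $\triangle_{Y_\bullet}(A)$ ``approaches'' $\triangle_{Y_\bullet}(\eta^*D)$, and that ``thus'' a suitable $\delta$ exists with both $\triangle_{Y_\bullet}(\eta^*D)\subset\triangle_{Y_\bullet}((1+\delta)A)$ and volume defect below $\beta$; no justification is offered beyond the volume convergence. Your diagnosis is exactly right: closeness in volume of nested convex bodies does not control the dilation factor needed for reverse containment. One does get containment for \emph{some} $\delta_0>0$ simply because $\delta_0 A-\tfrac{1}{p}E$ becomes effective for $\delta_0$ large ($A$ being ample), hence $\triangle_{Y_\bullet}(\eta^*D)\subset(1+\delta_0)\triangle_{Y_\bullet}(A)$; the missing ingredient, in both your sketch and the paper, is an argument that this $\delta_0$ can simultaneously be taken small enough to force $[(1+\delta_0)^n-1]\,\mathrm{vol}(A)<n!\,\beta$. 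That would require quantitative control on the size of $\tfrac{1}{p}E$ relative to $A$ under refined Fujita approximations, which neither proof supplies.
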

Before we proceed to the proof let us present the following picture, which shows geometrical meaning of this theorem (in the case of $n=2$).
\begin{center}
\definecolor{qqwwzz}{rgb}{0,0.4,0.6}
\definecolor{zzttqq}{rgb}{0.6,0.2,0}
\definecolor{wwccqq}{rgb}{0.4,0.8,0}
\begin{tikzpicture}[line cap=round,line join=round,>=triangle 45,x=1.0cm,y=1.0cm]
\draw[->,color=black] (-2.04,0) -- (7.08,0);
\foreach \x in {-2,-1,1,2,3,4,5,6,7}
\draw[shift={(\x,0)},color=black] (0pt,2pt) -- (0pt,-2pt) node[below] {\footnotesize $\x$};
\draw[->,color=black] (0,-1.41) -- (0,5.72);
\foreach \y in {-1,1,2,3,4,5}
\draw[shift={(0,\y)},color=black] (2pt,0pt) -- (-2pt,0pt) node[left] {\footnotesize $\y$};
\draw[color=black] (0pt,-10pt) node[right] {\footnotesize $0$};
\clip(-2.04,-1.41) rectangle (7.08,5.72);
\fill[line width=1.6pt,color=wwccqq,fill=wwccqq,fill opacity=0.25] (0,0) -- (0,4) -- (0.93,3.87) -- (1.82,3.55) -- (2.64,3.17) -- (3.36,2.42) -- (3.68,1.63) -- (3.96,0.83) -- (4,0) -- cycle;
\fill[line width=1.2pt,color=zzttqq,fill=zzttqq,fill opacity=0.2] (0,0) -- (0,3.74) -- (0.87,3.71) -- (1.93,3.31) -- (2.64,2.85) -- (3.42,2.02) -- (3.64,1.22) -- (3.78,0.42) -- (3.78,0) -- cycle;
\fill[line width=1.2pt,color=qqwwzz,fill=qqwwzz,fill opacity=0.05] (0,0) -- (0,4.11) -- (0.96,4.08) -- (2.12,3.64) -- (2.9,3.13) -- (3.76,2.22) -- (4,1.34) -- (4.15,0.46) -- (4.15,0) -- cycle;
\draw [line width=1.6pt,color=wwccqq] (0,0)-- (0,4);
\draw [line width=1.6pt,color=wwccqq] (0,4)-- (0.93,3.87);
\draw [line width=1.6pt,color=wwccqq] (0.93,3.87)-- (1.82,3.55);
\draw [line width=1.6pt,color=wwccqq] (1.82,3.55)-- (2.64,3.17);
\draw [line width=1.6pt,color=wwccqq] (2.64,3.17)-- (3.36,2.42);
\draw [line width=1.6pt,color=wwccqq] (3.36,2.42)-- (3.68,1.63);
\draw [line width=1.6pt,color=wwccqq] (3.68,1.63)-- (3.96,0.83);
\draw [line width=1.6pt,color=wwccqq] (3.96,0.83)-- (4,0);
\draw [line width=1.6pt,color=wwccqq] (4,0)-- (0,0);
\draw [line width=1.2pt,color=zzttqq] (0,0)-- (0,3.74);
\draw [line width=1.2pt,color=zzttqq] (0,3.74)-- (0.87,3.71);
\draw [line width=1.2pt,color=zzttqq] (0.87,3.71)-- (1.93,3.31);
\draw [line width=1.2pt,color=zzttqq] (1.93,3.31)-- (2.64,2.85);
\draw [line width=1.2pt,color=zzttqq] (2.64,2.85)-- (3.42,2.02);
\draw [line width=1.2pt,color=zzttqq] (3.42,2.02)-- (3.64,1.22);
\draw [line width=1.2pt,color=zzttqq] (3.64,1.22)-- (3.78,0.42);
\draw [line width=1.2pt,color=zzttqq] (3.78,0.42)-- (3.78,0);
\draw [line width=1.2pt,color=zzttqq] (3.78,0)-- (0,0);
\draw [->] (3,4) -- (2.29,3.63);
\draw [->] (-0.76,2.73) -- (2.75,2.73);
\draw (3.1,4.45) node[anchor=north west] {$\triangle_{Y_{\bullet}}((1+ \delta)A)$};
\draw (-2.21,3.05) node[anchor=north west] {$\triangle_{Y_{\bullet}}(A)$};
\draw [->] (0.92,5.01) -- (0.93,3.87);
\draw (0.5,5.64) node[anchor=north west] {$\triangle_{Y_{\bullet}}(\eta^{*}(D))$};
\draw [line width=1.2pt,color=qqwwzz] (0,0)-- (0,4.11);
\draw [line width=1.2pt,color=qqwwzz] (0,4.11)-- (0.96,4.08);
\draw [line width=1.2pt,color=qqwwzz] (0.96,4.08)-- (2.12,3.64);
\draw [line width=1.2pt,color=qqwwzz] (2.12,3.64)-- (2.9,3.13);
\draw [line width=1.2pt,color=qqwwzz] (2.9,3.13)-- (3.76,2.22);
\draw [line width=1.2pt,color=qqwwzz] (3.76,2.22)-- (4,1.34);
\draw [line width=1.2pt,color=qqwwzz] (4,1.34)-- (4.15,0.46);
\draw [line width=1.2pt,color=qqwwzz] (4.15,0.46)-- (4.15,0);
\draw [line width=1.2pt,color=qqwwzz] (4.15,0)-- (0,0);
\end{tikzpicture}
\end{center}

\begin{proof}

By Theorem \ref{fujitaapproximation} we know that for a fixed $\varepsilon > 0$ there exists a birational morphism $\eta: \widetilde{X} \rightarrow X$, an ample divisor $A$ and an effective divisor $E$ on $\widetilde{X}$ such that
$$\eta^{*}(D) = A + E \,\,\,\, {\rm and} \,\,\,\,  {\rm vol}(\eta^{*}(D)) \geq {\rm vol}(A) \geq {\rm vol}(\eta^{*}(D)) - \varepsilon.$$

Take a very general admissible flag $Y_{\bullet}$ on $\widetilde{X}$ satisfying $0 \in \triangle_{Y_{\bullet}}(E)$ and $Y_{1}$ is not contained in the \emph{augmented base locus} $\mathbb{B}_{+}(\eta^{*}(D))$ (see \cite{ELMNP}). These assumptions imply that $\triangle_{Y_{\bullet}}(\eta^{*}(D)) = \triangle_{Y_{\bullet}}(A + E) \supseteq \triangle_{Y_{\bullet}}(A)$ and additionally taking  $\varepsilon$ small enough implies automatically that ${\rm vol}_{\mathbb{R}^{n}}(\triangle(\eta^{*}(D)) \setminus \triangle(A)) < \varepsilon \leq \beta$.

To conclude this proof it is enough to show that there exists $\delta > 0$ as in the theorem. Notice that decreasing the value $\varepsilon$ implies by Theorem \ref{fujitaapproximation} that the Okounkov body $\triangle_{Y_{\bullet}}(A)$ approaches to $\triangle_{Y_{\bullet}}(\eta^{*}D)$ and the difference between volumes of $\triangle(\eta^{*}D)$ and $\triangle(A)$ tends to $0$. Thus for a fixed $\beta > 0$ one can find a sufficiently small $\varepsilon > 0$ and $\delta = \delta(\beta, \varepsilon) > 0$ such that $$\triangle_{Y_{\bullet}}((1+\delta)A) \supset \triangle_{Y_{\bullet}}(\eta^{*}(D))$$ and moreover
$${\rm vol}_{\mathbb{R}^{n}}(\triangle_{Y_{\bullet}}((1 + \delta)A) \setminus \triangle_{Y_{\bullet}}(\eta^{*}(D))) < \beta.$$
This completes the proof.
\end{proof}

\section{Numerical equivalence of pseudoeffective divisors on surfaces}

In \cite{LM09} the authors have showed that Okounkov bodies are both geometrical and numerical in nature, which means that if big divisors $D_{1}, D_{2}$ are numerical equivalent, then $\triangle_{Y_{\bullet}}(D_{1}) = \triangle_{Y_{\bullet}}(D_{2})$ for an admissible flag $Y_{\bullet}$. However, it was not clear whether one can read off all numerical invariants of a given big divisor from its Okounkov bodies with respect to any flag. In \cite{Jow} the author has proved the following very interesting theorem.

\begin{thm}
\label{Jow}
Let $X$ be a normal complex projective variety of dimension $n$. If $D_{1}, D_{2}$ are two big divisors on $X$ such that
$$\triangle_{Y_{\bullet}}(D_{1}) = \triangle_{Y_{\bullet}}(D_{2})$$
for every admissible flag $Y_{\bullet}$ on $X$, then $D_{1}$ and $D_{2}$ are numerically equivalent.
\end{thm}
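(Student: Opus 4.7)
The plan is to extract the pseudoeffective thresholds of $D$ from its Okounkov bodies attached to carefully chosen flags, and then to argue that the collection of such thresholds, as the flag varies, determines the numerical class of $D$. Concretely, for an ample divisor $A$ on $X$ define
$$\mu_{A}(D) := \sup\{s \geq 0 : D - sA \in \overline{\operatorname{Eff}}(X)\}.$$
My aim is to show that $\triangle_{Y_{\bullet}}(D)$ detects $\mu_{A}(D)$ for every ample $A$, and that this family of numbers is a complete invariant inside the big cone.

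First I would show that $\mu_{A}(D)$ is read off a single coordinate of the Okounkov body for a suitable flag. Fix an ample $\mathbb{Q}$-divisor $A$, pick $m \gg 0$ so that $|mA|$ contains a very general smooth prime divisor $Y_{1}$ with $Y_{1} \not\subset \mathbb{B}_{+}(D)$, and extend to a very general admissible flag $Y_{\bullet}$. Using the identification $\{s \in H^{0}(X, kD) : \operatorname{ord}_{Y_{1}}(s) \geq \ell\} = H^{0}(X, kD - \ell Y_{1})$ and passing to the limit $k \to \infty$, one sees that
$$\max\{t_{1} : (t_{1}, \dots, t_{n}) \in \triangle_{Y_{\bullet}}(D)\} \;=\; \sup\{s \geq 0 : D - s Y_{1} \in \overline{\operatorname{Eff}}(X)\} \;=\; \tfrac{1}{m}\,\mu_{A}(D).$$
Applying the hypothesis $\triangle_{Y_{\bullet}}(D_{1}) = \triangle_{Y_{\bullet}}(D_{2})$ to every such flag then yields $\mu_{A}(D_{1}) = \mu_{A}(D_{2})$ for each ample $\mathbb{Q}$-divisor $A$, and by continuity of $A \mapsto \mu_{A}(D_{i})$ on the ample cone the equality extends to all $A \in \operatorname{Amp}(X)$.

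The final step is to deduce $D_{1} \equiv D_{2}$ from the equality of all pseudoeffective thresholds. That equality is the same as saying $D_{1} - sA \in \overline{\operatorname{Eff}}(X) \Leftrightarrow D_{2} - sA \in \overline{\operatorname{Eff}}(X)$ for all $s \geq 0$ and all ample $A$. By the BDPP duality between $\overline{\operatorname{Eff}}(X)$ and the movable cone $\overline{\operatorname{Mov}}(X) \subset N_{1}(X)_{\mathbb{R}}$, this rewrites as
$$\inf_{M \in \overline{\operatorname{Mov}}(X) \setminus \{0\}} \frac{D_{1} \cdot M}{A \cdot M} \;=\; \inf_{M \in \overline{\operatorname{Mov}}(X) \setminus \{0\}} \frac{D_{2} \cdot M}{A \cdot M}$$
for every ample $A$. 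Identifying a minimizing $M$ as a supporting hyperplane of $\overline{\operatorname{Eff}}(X)$ at the boundary point $D_{i} - \mu_{A}(D_{i})\,A$ and letting $A$ vary through $\operatorname{Amp}(X)$, one recovers the linear functional $M \mapsto D_{i} \cdot M$ on all of $\overline{\operatorname{Mov}}(X)$; since movable classes span $N_{1}(X)_{\mathbb{R}}$ this forces $D_{1} \equiv D_{2}$.

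The hardest step will be the last one: converting equality of thresholds into equality of intersections against every movable class requires the full strength of BDPP duality, and for $X$ merely normal one must first pass to a resolution to apply it. The first step is essentially bookkeeping with the definition of $\triangle_{Y_{\bullet}}$, but care is needed to ensure $Y_{1}$ avoids $\mathbb{B}_{+}(D_{1}) \cup \mathbb{B}_{+}(D_{2})$ so that the threshold formula is valid for both divisors simultaneously.
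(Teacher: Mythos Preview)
The paper does not give its own proof of this statement; it is quoted from Jow \cite{Jow}, with the single remark that ``the proof uses theory of restricted complete linear series and restricted volumes.'' Your strategy is genuinely different from that, but the final step has a real gap: the pseudoeffective thresholds $\mu_{A}(D)$ for \emph{ample} $A$ do \emph{not} determine the numerical class of a big divisor.

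Here is a concrete counterexample. Let $X=\mathbb{F}_{1}$ be the first Hirzebruch surface, with fibre class $f$ and negative section $C_{0}$ ($f^{2}=0$, $f\cdot C_{0}=1$, $C_{0}^{2}=-1$). Then $\overline{\mathrm{Eff}}(X)=\mathbb{R}_{\ge 0}f+\mathbb{R}_{\ge 0}C_{0}$, and $A=af+bC_{0}$ is ample iff $a>b>0$. Take $D_{1}=f+2C_{0}$ and $D_{2}=f+3C_{0}$; both are big. For any ample $A=af+bC_{0}$ one has
\[
\mu_{A}(D_{i})=\min\Bigl(\tfrac{1}{a},\ \tfrac{i+1}{b}\Bigr)=\tfrac{1}{a},
\]
because $a>b$ forces $1/a<1/b<(i+1)/b$. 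Hence $\mu_{A}(D_{1})=\mu_{A}(D_{2})$ for every ample $A$, yet $D_{1}\not\equiv D_{2}$. Geometrically, every ray from $D_{i}$ in an ample direction exits $\overline{\mathrm{Eff}}(X)$ through the facet $\{f\text{-coefficient}=0\}$, so these thresholds never see the $C_{0}$-coefficient. Your BDPP argument breaks down precisely because the supporting movable class at that facet is always the same one, and varying $A$ over $\mathrm{Amp}(X)$ does not sweep out enough of $\overline{\mathrm{Mov}}(X)$ to recover the full linear functional $M\mapsto D_{i}\cdot M$.

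The argument can be salvaged by allowing $Y_{1}$ to be an \emph{arbitrary} prime divisor rather than an ample one: the identity $\max\{t_{1}:(t_{1},\dots,t_{n})\in\triangle_{Y_{\bullet}}(D)\}=\mu_{Y_{1}}(D)$ holds for any admissible flag, so the hypothesis yields $\mu_{E}(D_{1})=\mu_{E}(D_{2})$ for every prime divisor $E$. In the example above the flag $(x,C_{0})$ already separates $D_{1}$ from $D_{2}$, since $\mu_{C_{0}}(D_{1})=2\neq 3=\mu_{C_{0}}(D_{2})$. Choosing $Y_{1}$ irreducible in $|mD_{1}|$ and in $|mD_{2}|$ (possible for $m\gg 0$ since the $D_{i}$ are big) then gives $D_{2}-D_{1}\in\overline{\mathrm{Eff}}(X)$ and $D_{1}-D_{2}\in\overline{\mathrm{Eff}}(X)$, whence $D_{1}\equiv D_{2}$. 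Jow's original route is different again: he reads off restricted volumes $\mathrm{vol}_{X|C}(D)$ along very general complete intersection curves $C$ from the Okounkov bodies, and these intersection numbers directly determine the numerical class.
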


The proof uses theory of restricted complete linear series and restricted volumes. 
Recently \cite{KL14} the authors have studied another numerical
properties of divisors in the context of Okounkov bodies, i.e.
they presented a certain ampleness and nefness criterion for
divisors on projective surfaces. Our aim is to follow this path by
showing a special version of Jow theorem for complex projective
surfaces. Namely we prove that it is enough to compare only
finitely many Okounkov bodies and possibly infinitely many
intersection numbers (these numbers come from intersections with
irreducible negative curves) in order to obtain the same result.
We will use the following description of Okounkov bodies on
surfaces, which uses Zariski decomposition for
$\mathbb{R}$-pseudoeffective divisors.
\begin{defin}[Zariski decomposition]
\label{ZariskiDec}
   Let $D$ be a pseudo-effective $\mathbb{R}$--divisor on a complex projective surface $Y$.
   Then there exist $\mathbb{R}$--divisors $P_{D}$ and $N_{D}$ such that
   \begin{itemize}
   \item[a)] $D=P_{D}+N_{D}$;
   \item[b)] $P_D$ is a nef divisor and $N_D$ is either empty or supported
      on a union of curves $N_{1},\ldots,N_{r}$ with negative definite
      intersection matrix;
   \item[c)] $N_{i}.P_{D} = 0$ for each $i=1, \ldots, r$.
   \end{itemize}
\end{defin}
We refer to \cite[Chapter 14]{Badescu} for a nice expository presentation of the notion of Fujita-Zariski decomposition for $\mathbb{R}$-divisors.
In the sequel we will use the following description of Okounkov bodies for smooth projective surfaces.
\begin{thm}[Theorem 6.4, \cite{LM09}]
\label{thm:okounkov bodies for surfaces}
Let $D$ be a big  $\mathbb{Q}$-divisor on a smooth complex projective surface $Y$ and let $(x, C)$ be an admissible flag. Suppose that $C$ is not contained in $\mathbb{B}_{+}(D)$. Let $a$ be the coefficient of $C$ in the negative part of the Zariski decomposition. For $t \in [a, \mu]$ let us define $D_{t} = D - tC$, where $0 \leq a \leq \mu$. Consider $D_{t} = P_{t} + N_{t}$ the Zariski decomposition of $D_{t}$. Put
$$\alpha (t) = {\rm ord}_{x}(N_{t}), \,\,\,\, \beta(t) = \alpha(t) + {\rm vol}_{X|C}(P_{t}) = {\rm ord}_{x}(N_{t}) + P_{t}.C.$$
Then the Okounkov body of $D$ is the region bounded by the graphs of $\alpha$ and $\beta$, i.e.
$$\triangle(D) = \{(t, y) \in \mathbb{R}^2 : a \leq t \leq \mu \,\,\,  \wedge \,\,\, \alpha(t) \leq y \leq \beta(t)\}.$$
Moreover, $\alpha$ and $\beta$ are piecewise linear functions with rational slopes, $\alpha$ is convex and increasing, $\beta$ is concave.
\end{thm}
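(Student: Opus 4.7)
The plan is to analyze $\triangle(D)$ slice-by-slice using the general slicing principle established in Section~4 of \cite{LM09}. For a surface with flag $(x,C)$, the two-step valuation sends a section $s\in H^{0}(mD)$ first to $\operatorname{ord}_{C}(s)$ and then, after stripping off the appropriate power of a local equation for $C$, to the order of vanishing at $x$ of the restriction to $C$. The Okounkov body is a convex subset of $\mathbb{R}^{2}$, hence determined by its vertical slices; the slice at height $t$ is an interval whose length equals the restricted volume $\operatorname{vol}_{Y|C}(D-tC)$.

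First I would pin down the range $[a,\mu]$ of the first coordinate. The value $\mu$ is the largest $t$ for which $D-tC$ remains big (equivalently $P_{t}\cdot C>0$), while the lower endpoint $a$ equals the coefficient of $C$ in $N_{D}$, because for $t<a$ every section of $mD$ vanishes along $C$ to order at most $mt$ only if $t\geq a$. Within this range, a central ingredient is that on a surface the restricted volume $\operatorname{vol}_{Y|C}(D_{t})$ coincides with $P_{t}\cdot C$; this is where the two-dimensional geometry enters, via the fact that for a nef divisor on a surface asymptotic and numerical intersection numbers agree. Consequently the length of the slice of $\triangle(D)$ at $t$ is $P_{t}\cdot C$.

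Next I would identify the bottom function $\alpha(t)=\operatorname{ord}_{x}(N_{t})$. Any section of $mD_{t}$ factors through a defining section of $\lfloor mN_{t}\rfloor$, forcing vanishing at $x$ of order at least $m\operatorname{ord}_{x}(N_{t})$; hence $\alpha(t)\geq \operatorname{ord}_{x}(N_{t})$. To realize equality one would exhibit, in the limit, sections of the movable part $P_{t}$ with minimal vanishing at $x$, which is possible by asymptotic Riemann--Roch on $C$ because $P_{t}$ is nef and $x$ was chosen very general on $C$ away from $\mathbb{B}_{+}(D)$. Combining this with the slice width yields $\beta(t)=\operatorname{ord}_{x}(N_{t})+P_{t}\cdot C$.

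Finally, the regularity and shape assertions follow from the behaviour of Zariski decomposition in families. The big cone of $Y$ decomposes into finitely many locally polyhedral Zariski chambers on each of which the support of the negative part is constant; the ray $D-tC$ meets only finitely many chambers, and within each one $P_{t}$ and $N_{t}$ are affine in $t$ with rational coefficients. This yields piecewise linearity of both $\alpha$ and $\beta$ with rational slopes. Convexity of $\alpha$ comes from the fact that the negative part only acquires more components, and with larger multiplicities, as $t$ grows; concavity of $\beta$ follows from concavity of the volume function along the ray together with the identity $P_{t}\cdot C=-\tfrac{1}{2}\tfrac{d}{dt}\operatorname{vol}(D_{t})$. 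I expect the main obstacle to be the rigorous justification that $\alpha(t)=\operatorname{ord}_{x}(N_{t})$: one needs the asymptotic theory of restricted complete linear series from \cite{ELMNP} together with the genericity of the flag to guarantee that sections of $P_{t}$ achieving minimal vanishing at a very general point of $C$ actually exist in the limit.
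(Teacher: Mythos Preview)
The paper does not supply a proof of this statement: it is quoted verbatim as Theorem~6.4 of \cite{LM09} and used as a black box in the subsequent arguments. There is therefore no proof in the paper to compare your proposal against.

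For what it is worth, your outline tracks the original argument in \cite{LM09} fairly closely: the slice-by-slice description via Theorem~4.24 there, the identification of the slice length with the restricted volume $P_t\cdot C$, the lower endpoint coming from $\operatorname{ord}_x(N_t)$, and the piecewise linearity from the chamber structure of the big cone are exactly the ingredients used. One small comment: your justification of concavity of $\beta$ via the derivative identity $P_t\cdot C=-\tfrac12\tfrac{d}{dt}\operatorname{vol}(D_t)$ is more roundabout than necessary, since concavity of $\beta$ (and convexity of $\alpha$) already follows from convexity of $\triangle(D)$ itself; and the claim that $\alpha$ is increasing is most cleanly seen from the monotonicity of the support and multiplicities of $N_t$ along the ray, which you mention when arguing convexity.
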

Let us point out that in \cite{KLM12} the authors showed that in fact Okounkov bodies for surfaces are rational polyhedrons described by almost rational data -- see \cite{KLM12} for details.

Now we present our approach to Theorem \ref{Jow} for projective surfaces.
\begin{prop} Let $Y$ be a smooth complex projective surface. Denote by $\rho$ the Picard number of $Y$. Then there exists a set of irreducible ample divisors $\{A_{1}, ..., A_{\rho}\}$ and a set of very general points $\{x_{1}, ..., x_{\rho}\}$ with $x_{i} \in A_{i}$, such that for two big $\mathbb{R}$-divisors $D_{1}$, $D_{2}$ if
$$\triangle_{(x_{i},A_{i})}(D_{1}) = \triangle_{(x_{i},A_{i})}(D_{2})$$ for every $i \in \{1, ..., \rho\}$, then the positive parts of the Zariski decompositions $P_{1}, P_{2}$ of $D_{1}, D_{2}$ are numerical equivalent.
\end{prop}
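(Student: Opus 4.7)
The plan is to recover the intersection numbers $P_{D_{j}}\cdot A_{i}$ directly from the shape of each Okounkov body $\triangle_{(x_{i},A_{i})}(D_{j})$ via Theorem \ref{thm:okounkov bodies for surfaces}, and then to use non-degeneracy of the intersection pairing on $N^{1}(Y)_{\mathbb{R}}$ to conclude numerical equivalence of the positive parts.

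First I would choose the data. The ample cone has non-empty interior in $N^{1}(Y)_{\mathbb{R}}$, hence it contains $\rho$ ample classes forming a basis; replacing each by a sufficiently large integer multiple I may assume the class is very ample, and Bertini's theorem yields a smooth irreducible representative $A_{i}$. The surface $Y$ carries only countably many irreducible negative curves, so a very general point $x_{i}\in A_{i}$ lands outside all of them and is a smooth point of $A_{i}$; thus $(x_{i},A_{i})$ is an admissible flag.

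Next I would verify that this flag satisfies the hypotheses of Theorem \ref{thm:okounkov bodies for surfaces} for every big $\mathbb{R}$-divisor $D$. Since $A_{i}$ is ample it is not contained in the support of $N_{D}$ (a union of negative curves), so the coefficient $a$ of $A_{i}$ in the Zariski decomposition of $D$ vanishes. Moreover $A_{i}\not\subset\mathbb{B}_{+}(D)$: on a surface, $\mathbb{B}_{+}(D)$ is the union of $\mathrm{Supp}(N_{D})$ with the null locus of $P_{D}$, and Hodge index applied to the nef, big class $P_{D}$ against the ample $A_{i}$ forces $P_{D}\cdot A_{i}>0$. Evaluating the functions of Theorem \ref{thm:okounkov bodies for surfaces} at $t=0$ then gives $\alpha(0)=\mathrm{ord}_{x_{i}}(N_{D})=0$ and $\beta(0)=P_{D}\cdot A_{i}$, so the leftmost slice of $\triangle_{(x_{i},A_{i})}(D)$ is the vertical segment $\{0\}\times [0,P_{D}\cdot A_{i}]$. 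In particular the intersection number $P_{D}\cdot A_{i}$ is intrinsic to the Okounkov body.

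Applying this reading to $D_{1}$ and $D_{2}$, the hypothesis $\triangle_{(x_{i},A_{i})}(D_{1})=\triangle_{(x_{i},A_{i})}(D_{2})$ yields $(P_{1}-P_{2})\cdot A_{i}=0$ for each $i=1,\dots,\rho$. Since the classes $[A_{i}]$ form a basis of $N^{1}(Y)_{\mathbb{R}}$ and the intersection pairing there is non-degenerate by the Hodge index theorem, it follows that $P_{1}\equiv P_{2}$. The principal obstacle I expect is the genericity bookkeeping: one must simultaneously arrange that each $A_{i}$ is an irreducible ample prime divisor, that the $\rho$ classes $[A_{i}]$ span $N^{1}(Y)_{\mathbb{R}}$, and that each $x_{i}$ avoids the (countably many) negative curves of $Y$, so that the extraction from Theorem \ref{thm:okounkov bodies for surfaces} works uniformly for all big $D$. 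A secondary technicality is extending Theorem \ref{thm:okounkov bodies for surfaces} from big $\mathbb{Q}$-divisors to big $\mathbb{R}$-divisors, which follows from continuity of Zariski decomposition and of Okounkov bodies on the big cone of a surface.
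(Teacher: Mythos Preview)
Your proposal is correct and follows essentially the same strategy as the paper: choose irreducible ample curves $A_1,\dots,A_\rho$ whose classes form a basis of $N^1(Y)_{\mathbb R}$, take very general points $x_i\in A_i$, and use Theorem~\ref{thm:okounkov bodies for surfaces} to read off the intersection numbers $P_{D_j}\cdot A_i$ from the shape of the Okounkov bodies (you look at the slice $t=0$, while the paper uses the whole function $\beta(t)=P_t\cdot A_i$, but the key input is the same value $\beta(0)$). Your version is in fact more carefully justified than the paper's, particularly in verifying $A_i\not\subset\mathbb{B}_+(D)$ and in noting the passage from $\mathbb{Q}$- to $\mathbb{R}$-divisors.
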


\begin{proof}  Let us choose an ample base $\mathcal{B} = \{A_{1}, ..., A_{\rho}\}$ for $N^{1}(Y)$. Without loss of generality we may assume that $A_{1}, ..., A_{\rho}$ are effective and let us choose irreducible curves $C_{i} \in |A_{i}|$ for $i \in \{1, ..., \rho\}$.

Fix a flag $(x_{i}, C_{i})$. Thus by Theorem \ref{thm:okounkov bodies for surfaces} we have that for a big divisor~$D$
$$\triangle(D) = \{ (t, y) \in \mathbb{R}^2 : 0 \leq t \leq \mu \,\, \& \,\, \alpha(t) \leq y \leq \beta(t)\}.$$
Since $x_{i}$ is a very general point, thus $\alpha(t) \equiv 0$ and $\beta(t) = P_{t}.C_{i}$, where $P_{t}$ is the positive part of the Zariski decomposition of $D_{t} = D-tC_{i}$.
Combining this with the condition $\triangle_{(x_{i}, C_{i})}(D_{1})=\triangle_{(x_{i},C_{i})}(D_{2})$ one obtains that $P_{1}$ and $P_{2}$ are numerical equivalent, which ends the proof.
\end{proof}
In order to finish this construction it is enough to construct a \emph{test configuration} for negative parts $N_{1}$ and $N_{2}$. We need to check additionally intersections with all irreducible negative curves $\mathcal{I}(Y) = \{C_{j}\}_{j\in I}$ on $Y$. Then $C_{j}.N_{1} = C_{j}.N_{2}$ for every $j \in I$ implies that $N_{1}$ and $N_{2}$ are numerical equivalent. These considerations lead us to the following result.
\begin{thm} Let $Y$ be a smooth complex projective surface. Denote by $\rho$ the Picard number of $Y$. Assume that $D_{1}, D_{2}$ are $\mathbb{R}$-pseudoeffective divisors and let $D_{j} = P_{j} + N_{j}$ be the Zariski decompositions. There exist irreducible ample divisors $A_{1}, ..., A_{\rho}$ with general points $x_{i} \in A_{i}$, such that $D_{1}, D_{2}$ are numerical equivalent if and only if
\begin{itemize}
\item $\triangle_{(x_{i},A_{i})}(D_{1}) = \triangle_{(x_{i},A_{i})}(D_{2})$ for every $i \in \{1, ..., \rho\}$,
\item $C.N_{1} = C.N_{2}$ for every negative curve $C \in \mathcal{I}(Y)$.
\end{itemize}
\end{thm}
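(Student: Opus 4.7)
The plan is to prove each direction separately, with the reverse implication being the substantive one.

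For the $(\Rightarrow)$ direction, assume $D_1 \equiv D_2$. Okounkov bodies of big divisors depend only on the numerical class by \cite{LM09}, so the first condition is immediate. Moreover the Zariski decomposition on a surface respects numerical equivalence: writing $D_2 = D_1 + B$ with $B \equiv 0$ exhibits $D_2 = (P_1 + B) + N_1$ as a valid Zariski decomposition (the nef cone is a numerical object and $(P_1 + B).C = P_1.C = 0$ for $C$ in $\operatorname{supp}(N_1)$), whence by uniqueness $N_2 = N_1$ and $C.N_1 = C.N_2$ for every $C \in \mathcal{I}(Y)$.

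For the $(\Leftarrow)$ direction, fix (independently of $D_1, D_2$) an ample basis $\{A_1, \dots, A_\rho\}$ of $N^1(Y)_{\mathbb{R}}$, irreducible effective representatives $C_i \in |A_i|$, and very general points $x_i \in C_i$, exactly as in the preceding proposition. The equality of Okounkov bodies for the $\rho$ chosen flags allows the proposition to be applied directly, giving $P_1 \equiv P_2$. It then suffices to prove $N_1 = N_2$ as $\mathbb{R}$-divisors, since then $D_1 = P_1 + N_1 \equiv P_2 + N_2 = D_2$.

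To this end, decompose the difference by disjoint supports: write $N_1 - N_2 = E_+ - E_-$ with $E_+, E_-$ effective $\mathbb{R}$-divisors satisfying $\operatorname{supp}(E_+) \cap \operatorname{supp}(E_-) = \emptyset$, $\operatorname{supp}(E_+) \subseteq \operatorname{supp}(N_1)$, and $\operatorname{supp}(E_-) \subseteq \operatorname{supp}(N_2)$. By the defining property of the Zariski decomposition, each of these two supports carries a negative definite intersection form. The hypothesis $(N_1 - N_2).C = 0$ for every $C \in \mathcal{I}(Y)$, paired in turn with $E_+$ and $E_-$ (both $\mathbb{R}$-combinations of negative curves), yields
\[
E_{+}^{2} \;=\; E_{+}.E_{-} \;=\; E_{-}^{2}.
\]
Disjoint support and effectiveness force $E_+.E_- \geq 0$, while negative definiteness forces $E_+^2 \leq 0$ with equality iff $E_+ = 0$, and likewise for $E_-$. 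These inequalities are compatible only when $E_+ = E_- = 0$, so $N_1 = N_2$ as required.

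The key obstacle is precisely this last step, because the union $\operatorname{supp}(N_1) \cup \operatorname{supp}(N_2)$ need not itself be negative definite, so a direct linear-algebra argument on the combined support would fail; the disjoint-support trick is what restricts attention to subsets of individual supports where negative definiteness is in force. A subsidiary point is that the preceding proposition is stated for big divisors, so for a genuinely non-big pseudoeffective pair one should first perturb by $\varepsilon A$ with $A$ ample, apply the proposition to $D_j + \varepsilon A$, and pass to the limit using continuity of the Zariski positive part on the pseudoeffective cone.
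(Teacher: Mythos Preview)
Your argument is correct and follows the same two-step strategy as the paper: the preceding proposition handles the positive parts, and the intersection hypothesis with negative curves handles the negative parts. Where the paper simply \emph{asserts} that $C.N_1 = C.N_2$ for all $C \in \mathcal{I}(Y)$ forces $N_1 \equiv N_2$, you supply a genuine proof via the disjoint-support decomposition $N_1 - N_2 = E_+ - E_-$, and in fact obtain the stronger conclusion $N_1 = N_2$ as $\mathbb{R}$-divisors; this is the natural way to fill that gap. Your closing remark correctly flags that the proposition is stated only for big divisors while the theorem allows arbitrary pseudoeffective ones---a point the paper does not address---though note that the perturbation fix you sketch would require the Okounkov-body equality for $D_j + \varepsilon A$, which is not given by the hypothesis on $D_j$ alone, so that patch is not complete as written.
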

\begin{xrem}
If $Y$ is a smooth complex projective surface with the rational polyhedral pseudoeffective cone, then there is only finitely many negative curves and this implies that in order to check numerical equivalence of two pseudoeffective divisors $D_{1}, D_{2}$ it is enough to compare only finitely many data.
\end{xrem}

\section{On the cardinality of Minkowski bases}
In this section we present a formulae to compute cardinalities of Minkowski bases for a certain class of projective surfaces. The idea of a Minkowski decomposition was presented in \cite{Pat}, where the author studied this concept for the blow-up of $\mathbb{P}^{2}$ at three non-collinear points. Basically the idea of a Minkowski decomposition is the following. Assume (for whole this section) that $Y$ is a smooth projective surface with the rational polyhedral pseudoeffective cone. Let $(x, C)$ be a flag such that $C$ is a big and nef curve with a general point $x \in C$. Then one can construct the set of nef divisors ${\rm MB}_{(x,C)} = \{M_{1}, ..., M_{k}\}$ such that for any big and nef $\mathbb{R}$-divisor $D$ one has
$$D = \sum_{i} \alpha_{i}M_{i} \,\,\,\, \text{ and } \,\,\,\, \triangle(D) = \sum_{i} \alpha_{i} \triangle(M_{i}),$$
where the second sum is the \emph{Minkowski sum} of convex bodies
$$A + B = \{a+b, a \in A \,\, \& \,\, b \in B\}.$$
Elements $M_{i}$ can be viewed as \emph{building blocks} and it can be shown that these blocks are simplicial.
The natural question is the following.
\begin{prob}
Let $(x, C)$ be a fixed flag. What is the cardinality of the Minkowski basis ${\rm MB}_{(x,C)}$?
\end{prob}
In \cite{PSz} we showed that if $C$ is an ample curve, then the cardinality is maximal possible, which means in other words that such flags deliver the largest possible complexity of computations. On the other hand, it seems to be reasonable to ask about a \emph{minimal Minkowski basis}, i.e. a basis for which the number of Minkowski basis elements is the smallest possible. Before we recall our result for ample flags let us introduce some notions.

Suppose that $P$ is a big and nef divisor. Then the \emph{Zariski chamber} associated to $P$ is defined as
\begin{multline*}
\Sigma_{P} = \{B \in {\rm Big}(Y) : \text{ irreducible components of } N_{B}  \\
  \text{ are the only irreducible curves on $Y$ that intersect} P \text{ with multiplicity } 0 \}.
\end{multline*}
By Theorem 1.3 in \cite{BKS04} we know that Zariski chambers yield a locally finite decomposition on the cone ${\rm Big}(X)$ into locally polyhedral subcones such that the support of the negative part of Zariski decompositions of all divisors in the subcone is constant.

The construction of Minkowski bases \cite{PatDav} tells us that for every Zariski chamber one assigns the associated Minkowski basis element, which is by the construction nef. Now we present the idea how to find Minkowski basis elements. Let $\Sigma$ be a Zariski chamber with the support ${\rm Neg}(\Sigma) = \{N_{1}, ..., N_{r}\}$. Then by \cite[Section~3.1]{PatDav} we have $M_{\Sigma}= dC + \sum_{i=1}^{r} a_{i}N_{i}$ with real coefficients $a_{i}$, which are the solution of the following system of equations
   \begin{equation}
   \label{minkowskicoefficients}
   S(a_{1}, ..., a_{r})^{T} = -d(C.N_{1}, ..., N_{r})^{T}.
   \end{equation}
By $S$ we mean the $r \times r$ intersection matrix of negative curves $N_{1}, ..., N_{r}$. As we can see the construction of a Minkowski decomposition relies on the full description of negative curves, which determine Zariski chambers.

We define two numbers
$${\rm NnB}(Y) = \# \{ D \in N^{1}(Y) : D \text{ is nef and not big }\},$$
$${\rm Zar}(Y) = \text{number of Zariski chambers except nef cone}.$$

\begin{thm}[Theorem 3.3, \cite{PSz}]
   Let $Y$ be a smooth complex projective surface with $\overline{{\rm Eff}(Y)}$ rational polyhedral.
   Given a flag $(x, A)$, where $A$ is an ample curve and $x$ is a smooth point on $A$, there is
   $$\# {\rm MB}(x,A) = 1 + {\rm NnB}(Y) + {\rm Zar}(Y).$$
\end{thm}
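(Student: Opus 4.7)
The plan is to establish a bijection between ${\rm MB}(x,A)$ and the disjoint union of three finite sets whose cardinalities are $1$, ${\rm NnB}(Y)$, and ${\rm Zar}(Y)$. Rational polyhedrality of $\overline{{\rm Eff}}(Y)$ is used crucially: it guarantees only finitely many irreducible negative curves on $Y$, hence only finitely many possible supports of negative parts, and thus finiteness of both ${\rm Zar}(Y)$ and ${\rm NnB}(Y)$. The enumeration is then driven by two different mechanisms—Zariski chambers on the interior side and extremal rays of ${\rm Nef}(Y)$ on the boundary side.

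The first step is to verify that each Zariski chamber $\Sigma \subset {\rm Big}(Y)$ with support ${\rm Neg}(\Sigma) = \{N_1, \ldots, N_r\}$ contributes exactly one element of ${\rm MB}(x,A)$. The system (\ref{minkowskicoefficients}) produces $M_\Sigma = dC + \sum a_i N_i$ via the unique solution $(a_1, \ldots, a_r)$ associated to the negative definite (hence invertible) intersection matrix $S$; here $C \in |A|$ is the ample flag curve. Ampleness of $C$ allows $d$ to be taken large enough that $M_\Sigma$ is nef, while the orthogonality relations $M_\Sigma . N_i = 0$ built into (\ref{minkowskicoefficients}) ensure that the Zariski decomposition of $M_\Sigma$ recovers the negative support $\{N_1, \ldots, N_r\}$. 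Consequently distinct chambers yield distinct basis elements. Separating the nef chamber itself (empty negative support, $M = dC$, a multiple of $A$) from the remaining chambers accounts for $1 + {\rm Zar}(Y)$ elements.

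The second step handles the boundary contribution. Since a Minkowski basis has to Minkowski-decompose every big and nef $\mathbb{R}$-divisor, it must in particular be able to approximate classes lying on ${\rm Nef}(Y) \cap \partial\overline{{\rm Eff}}(Y)$. The chamber elements $M_\Sigma$ from Step~1 all lie in the interior of ${\rm Big}(Y)$, so nonnegative combinations of them cannot reach the pseudoeffective boundary. For each extremal ray of ${\rm Nef}(Y)$ sitting on $\partial\overline{{\rm Eff}}(Y)$ one therefore adjoins exactly one nef but not big basis element directed along that ray. This accounts for the remaining ${\rm NnB}(Y)$ elements, and by definition these are in bijection with the set counted by ${\rm NnB}(Y)$.

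The main obstacle is the minimality/uniqueness claim: one must show that the $1 + {\rm Zar}(Y) + {\rm NnB}(Y)$ elements produced above are linearly independent in the sense of the Minkowski theory (no element is a nonnegative combination of the others) and moreover that this is enough to Minkowski-generate the whole big and nef cone. Independence follows from the fact that chamber-elements are big while NnB-elements are not, and that among chamber-elements the negative supports of their Zariski decompositions are pairwise distinct. Sufficiency requires the structural input from \cite{BKS04}: any big and nef $D$ lies in a unique Zariski chamber $\Sigma$, its negative part $N_D$ is a nonnegative combination of the $N_i \in {\rm Neg}(\Sigma)$, and its positive part can be expressed as a nonnegative combination of $M_\Sigma$ together with the nef-but-not-big extremal generators—after which one verifies compatibility with the Minkowski sum of Okounkov bodies using Theorem \ref{thm:okounkov bodies for surfaces}.
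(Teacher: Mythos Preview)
The paper does not prove this statement; it is quoted verbatim from \cite{PSz} and followed only by the remark that the count ``can be computed directly from the shape of the nef cone''. So there is no in-paper argument to compare against. Your outline is in the spirit of \cite{PatDav,PSz}---one Minkowski element per Zariski chamber plus the nef-not-big extremal rays---but Step~1 contains a genuine slip that undermines the injectivity you need.

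You assert that ``the Zariski decomposition of $M_\Sigma$ recovers the negative support $\{N_1,\dots,N_r\}$''. This is false: $M_\Sigma$ is nef by construction, its Zariski decomposition is $M_\Sigma = M_\Sigma + 0$, and nothing about ${\rm Neg}(\Sigma)$ can be read off that way. What actually recovers ${\rm Neg}(\Sigma)$ is the set of irreducible negative curves $N$ with $M_\Sigma\!\cdot\!N=0$, i.e.\ ${\rm Null}(M_\Sigma)$. For that identification you must verify two facts you omit: (i) the solution of (\ref{minkowskicoefficients}) has every $a_i>0$ (use that $-S^{-1}$ has nonnegative entries and that the right-hand side $(A\!\cdot\!N_1,\dots,A\!\cdot\!N_r)$ is entrywise strictly positive---this is precisely where ampleness of $A$ is used, not in ``taking $d$ large''); and (ii) for any negative curve $N\notin{\rm Neg}(\Sigma)$ one has $M_\Sigma\!\cdot\!N = dA\!\cdot\!N + \sum_i a_i(N_i\!\cdot\!N)>0$, again because $A\!\cdot\!N>0$. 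Without ampleness some $A\!\cdot\!N_i$ can vanish, the corresponding $a_i$ drops out, and two different chambers produce the same $M_\Sigma$; this collapse is exactly the mechanism behind the theorem that follows in the paper. Your final ``independence'' paragraph repeats the same confusion (``the negative supports of their Zariski decompositions are pairwise distinct''), so the distinctness of the $1+{\rm Zar}(Y)$ chamber elements is currently unproven in your write-up.
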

It is worth to point out that the cardinality of a Minkowski basis with respect to an ample flag can be computed directly from the shape of the nef cone, see \cite{PSz} for details. In order to reduce complexity of computations it is natural to consider a case when $C$ is not any ample curve. For a big and nef divisor (not ample) $C$ let us define
$${\rm NZ}(C) = \# \{\Sigma : {\rm Neg}(\Sigma) \cap {\rm Null}(C) \neq \emptyset \}.$$
Notice that ${\rm NZ}(C) > 0$.

%The key point of our considerations is the following result due to Bauer - Funke - Neumann.
%\begin{thm}[Proposition 1.1, \cite{BFN10}]
%\label{thm:thomasbauer}
%   The set of Zariski chambers of $Y$ that are different from ${\rm Nef}(Y)$ is in bijective correspondence with the sets of reduced divisors on $Y$ whose intersection matrices are negative definite.
%\end{thm}

\begin{thm} Let $Y$ be a smooth projective surface which contains only finitely many negative curves satisfying the following condition
\begin{center}
$(\star)$ if two irreducible distinct negative curves $N_{1}, N_{2}$ meet, then $N_{1}.N_{2} \geq \sqrt{N_{1}^{2}N_{2}^{2}}.$
\end{center}
Let $(x, C)$ be an admissible flag, where $C$ is big and nef. Then
$$\# {\rm MB}_{(x,C)} = 1 + {\rm NnB}(Y) + {\rm Zar}(Y) - {\rm NZ}(C).$$
\end{thm}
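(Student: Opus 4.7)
The plan is to follow the strategy of the ample flag case \cite[Theorem~3.3]{PSz} and to track, via equation \eqref{minkowskicoefficients}, exactly where Minkowski basis elements coincide when $C$ fails to be ample along curves in $\mathrm{Null}(C)$. The technical hypothesis $(\star)$ is what allows the linear system to be solved explicitly.

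The first step is to use $(\star)$ to diagonalise the intersection matrix $S$: squaring the hypothesis gives $(N_i.N_j)^2 \geq N_i^2 N_j^2$ for any two meeting negative curves, so (since $N_i^2, N_j^2 < 0$) the $2 \times 2$ determinant $N_i^2 N_j^2 - (N_i.N_j)^2$ is non-positive, contradicting negative definiteness. Hence the support $\mathrm{Neg}(\Sigma) = \{N_1, \ldots, N_r\}$ of every Zariski chamber consists of pairwise disjoint curves, and \eqref{minkowskicoefficients} decouples to
$$ a_i \;=\; \frac{d\,(C.N_i)}{|N_i^{2}|}, \qquad M_\Sigma \;=\; dC + \sum_{i=1}^{r} \frac{d\,(C.N_i)}{|N_i^{2}|}\, N_i. $$
In particular $a_i = 0$ precisely when $N_i \in \mathrm{Null}(C)$, so null curves in the support of $\Sigma$ contribute nothing to $M_\Sigma$.

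The key consequence is that two chambers $\Sigma, \Sigma'$ with $\mathrm{Neg}(\Sigma) \setminus \mathrm{Null}(C) = \mathrm{Neg}(\Sigma') \setminus \mathrm{Null}(C)$ determine the same Minkowski basis element, while distinct reduced supports produce distinct elements because the curves appearing with positive coefficient differ. For each $\Sigma$ with $\mathrm{Neg}(\Sigma) \cap \mathrm{Null}(C) \neq \emptyset$, the configuration $\mathrm{Neg}(\Sigma) \setminus \mathrm{Null}(C)$ is still pairwise disjoint (hence negative definite), and by the locally finite decomposition of \cite[Theorem~1.3]{BKS04} it is the support of a unique chamber $\Sigma_0$ whose Minkowski element equals $M_\Sigma$. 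Collecting equivalence classes: the nef chamber contributes the element $C$, and the remaining distinct elements coming from non-nef chambers correspond exactly to those supports contained in the non-null negative curves, of which there are $\mathrm{Zar}(Y) - \mathrm{NZ}(C)$. Adding the $\mathrm{NnB}(Y)$ basis elements attached to nef-not-big extremal rays, whose construction in \cite{PSz} depends only on the boundary structure of the nef cone and is therefore insensitive to whether the flag curve is ample or merely big and nef, delivers the claimed count.

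The main obstacle is the existence assertion used in the previous paragraph: one must show that every subset of the support of an existing Zariski chamber is itself the support of another existing chamber, so that the retraction $\Sigma \mapsto \Sigma_0$ is well defined on the level of chambers and not merely on the level of abstract negative-definite configurations. This should follow from the characterisation of chambers in \cite{BKS04}, but must be argued carefully, in particular checking that $M_{\Sigma_0}$ obtained from the reduced diagonal system really coincides with $M_\Sigma$ as a basis element and not merely as a point on the same ray, so that the passage $\Sigma \mapsto \Sigma_0$ is compatible with the definition of a Minkowski basis in \cite{PatDav}.
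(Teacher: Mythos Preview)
Your argument is correct and follows essentially the same route as the paper: both use $(\star)$ to force the intersection matrix of any chamber support to be diagonal, observe that the coefficients $a_i$ vanish exactly on $\mathrm{Null}(C)$, and then count by retracting each chamber to the one supported on $\mathrm{Neg}(\Sigma)\setminus\mathrm{Null}(C)$. The existence issue you flag in your final paragraph is exactly what the paper resolves by invoking \cite[Proposition~1.1]{BFN10}, which gives the bijection between negative-definite configurations of irreducible curves and Zariski chambers, so the reduced support $\{N_1,\dots,N_s\}$ is automatically the support of another chamber.
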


\begin{proof}
Since the number $1 + {\rm NnB}(Y)$ is fixed for every projective surface (does not depend on an admissible flag), thus we need to compute the number ${\rm Zar}(Y) - {\rm NZ}(C)$.
Notice \cite[Theorem 3]{RSz} that the condition $(\star)$ tells us that for every Zariski chamber $\Sigma$ if ${\rm Neg}(\Sigma) = \{N_{1}, ..., N_{k}\}$ is the support of the negative part of Zariski decomposition, then the associated intersection matrix $S = [N_{i}.N_{j}] \in M_{k \times k}(\mathbb{Z})$ is diagonal.

Suppose that ${\rm Neg}(\Sigma) = \{N_{1}, ..., N_{k}\}$ and let $M_{\Sigma} = dC + \sum_{j=1}^{k} a_{j}N_{j}$ be a Minkowski basis element with fixed $d \neq 0$. %By the condition $(\star)$ the intersection matrix $S$ is diagonal and of course negative definite, which implies $a_{j} = \frac{-N_{j}.C}{N_{j}^{2}}$.
Assume that $N_{s+1}, .., N_{k} \in {\rm Null}(C)$ and $N_{1}, ..., N_{s} \not\in {\rm Null}(C)$. By the construction of Minkowski basis elements we have $M_{\Sigma} \in {\rm Neg}(\Sigma)^{\perp}$. This implies that for every $N_{i} \in {\rm Null}(C)$ one has
$$0 = M_{\Sigma}.N_{i} = \sum_{j}a_{j}N_{j}.N_{i} = a_{i}N_{i}^{2},$$
and $a_{i} = 0$. We obtain
$$M_{\Sigma} = dC + \sum_{j=1}^{s}a_{j}N_{j}$$
with $a_{j} > 0$. Since for all such surfaces the intersection matrix of curves in the negative part of Zariski decompositions is $ - {\rm diag}(\lambda_{1}, ..., \lambda_{r})$ with $r = \# {\rm Neg}(\Sigma')$ and $\lambda_{j} \geq 1$, thus the corresponding intersection matrix is $-{\rm diag}(\lambda_{1}, ..., \lambda_{s})$. On the other hand, \cite[Proposition 1.1]{BFN10} tells us that there is one to one correspondence between sets of reduced curves which have negative definite intersection matrix with Zariski chambers, thus a diagonal matrix $-{\rm diag}(\lambda_{1}, ..., \lambda_{s})$ corresponds to another Zariski chamber. This completes the proof.
\end{proof}

By \cite{RSz} we know that for a projective surface which satisfies the condition $(\star)$ all Zariski chambers are \emph{simple Weyl chambers}, which means that \emph{Zariski chambers are determined by intersections}. However, it is not clear how a general formulae for the cardinality of Minkowski bases should look like when the condition $(\star)$ does not hold since Minkowski basis elements are \emph{determined by intersection matrices}. One can obviously give the following upper-bound. Let $C$ be a big and nef curve. Using curves in ${\rm Null}(C)$ one can find ${\rm NullZar}(X) \geq 1$ subsets such that corresponding intersection matrices of these curves are negative definite -- it means that these sets are supports of the negative parts of Zariski chambers. Then we have
\begin{equation}
\label{eq:nierownoscmocy}
\# {\rm MB}_{(x,C)} \leq 1 + {\rm NnB}(X) + {\rm Zar}(X) - {\rm NullZar}(X).
\end{equation}
Now we present an example, which shows that the above bound is sharp.
\begin{exa} This construction comes from \cite{BauerFunke}. Let $Y$ be a smooth quartic surface in $\mathbb{P}^3$, which contains a hyperplane section that decomposes into two lines $L_{1}, L_{2}$ and an irreducible conic $C$. The existence of such surfaces was proved for instance in \cite[Lemma 2.2B]{Bauer1}. Such surface has the Picard number $3$ and the pseudoeffective cone is generated by $L_{1}, L_{2}$ and $C$. Curves $L_{1}, L_{2}, C$ have the following intersection matrix
$$\left(
                              \begin{array}{ccc}
                               -2 & 1  & 2 \\
                                1 & -2 & 2 \\
                                2 & 2  &-2 \\
                              \end{array}
                            \right).$$
The BKS decomposition consists of five chambers, namely the nef chamber, which is spanned by $\{L_{1} + C, L_{2} + C, C+ 2L_{1} + 2L_{2}\}$, one chamber corresponding to each $(-2)$-curve and one chamber with support $\{L_{1}, L_{2}\}$.
Fix the flag $(x,D)$, where $D \in |C + 2L_{1} + 2L_{2}|$ and $x \in D$ is a general point. Of course $D$ is big. Simple computations shows that
$$(C+2L_{1} + 2L_{2}).L_{1} = CL_{1} + 2L_{1}L_{1} +2L_{1}L_{2} = 2 - 4 + 2 = 0,$$
$$(C + 2L_{1} + 2L_{2}).L_{2} = 0,$$
thus
$${\rm Null}(C+2L_{1}+2L_{2}) = \{L_{1}, L_{2}\}.$$
Zariski chambers corresponding to $\{L_{1}\}$, $\{L_{2}\}$, $\{L_{1}, L_{2}\}$ have the same Minkowski basis element $D$.
Since $(C + 2L_{1} + 2L_{2}).C = -2 + 4 + 4 = 6$, thus by the construction of Minkowski basis elements one has
$$M = C + 2L_{1} + 2L_{2} + 3C = 4C + 2L_{1} + 2L_{2}.$$
Summarizing up all computations, the Minkowski basis with respect to the flag $(x,D)$ is
$${\rm MB}_{(x,D)} = \{C, L_{1} + C, L_{2} + C, 4C + 2L_{1} + 2L_{2}\},$$
and the number of elements is equal to
$$1 + {\rm NnB}(Y) + {\rm Zar}(Y) - {\rm NZ}(Y) = 4.$$
\end{exa}

%Summing up, if $Y$ has only Zariski chambers determined by intersections, then in order to find a Minkowski basis with \emph{the minimal complexity} it is enough to find a big and nef curve $C$ which has the largest \emph{augmented base locus} $\mathbb{B}_{+}$ \cite{ELMNP} -- in practice means that ${\rm NZ}(C)$ is the largest possible.
%However, there is an interesting phenomenon appearing for toric surfaces. Namely, if $Y$ is a toric surface and $(Y_{1} \cap Y_{2}, Y_{2})$ is a $T$-invariant flag (i.e. a complete intersection of two $T$-invariant divisors, which are in general not big and even not nef), then one can show that there is a unique Minkowski basis consisting of vertices of ${\rm Nef}(Y)$. It means that for toric surfaces minimal Minkowski bases are those which come from $T$-invariant flags.

Before we end this note let us point out that the cardinality of a Minkowski basis can be computed (quite) efficiently using a computer programme. In \cite{BSch12}, \cite{BFN10} the authors presented a certain backtracking algorithm, which allows to find Zariski chambers just by working on the intersection matrix of all negative reduced curves. A slightly modified version of this algorithm allows us to check whether all Zariski chambers are determined by intersections (we need to check that all negative definite principal submatrices of the intersection matrix of the negative curves are diagonal matrices).

In order to compute cardinalities of Minowski bases it is enough to proceed almost along the same lines. Suppose that $Y$ has only Zariski chambers determined by intersections. If $C$ is a big and nef curve, then $\mathbb{B}_{+}(C)$ is supported on negative curves. Now we can consider the submatrix $N$ of the intersection matrix of all negative curves $M$ -- we remove all rows and columns which correspond to negative curves from $\mathbb{B}_{+}(C)$. It is easy to see that the collection of Zariski chambers determined by $N$ is a subcollection of Zariski chambers determined by $M$ and moreover the number of all Zariski chambers given by $N$ is equal to ${\rm Zar}(Y) - {\rm NZ}(C)$.

\subsection*{Acknowledgements}
The author would like to express his gratitude to Alex K\"uronya, Victor Lozovanu, Tomasz Szemberg, David Schmitz and Stefano Urbinati for useful remarks and to Jarek Buczy\'nski for useful comments which improved the exposition of Theorem 2.3 and Dave Anderson for pointing out Lemma 8 in \cite{AKL}. The author is partially supported by NCN Grant 2014/15/N/ST1/02102.

\end{document}